\newtheorem{lemma}[equation]{Lemma}
\newtheorem{proposition}[equation]{Proposition}
\newtheorem{corollary}[equation]{Corollary}
\newtheorem{question}[equation]{Question}
\newcommand\bG{{\mathbf G}}
\newcommand\bT{{\mathbf T}}
\newcommand\bs{{\mathbf s}}
\newcommand\BG{{\mathbb G}}
\newcommand\BL{{\mathbb L}}
\newcommand\BT{{\mathbb T}}
\newcommand\BC{{\mathbb C}}
\newcommand\BN{{\mathbb N}}
\newcommand\BZ{{\mathbb Z}}
\newcommand\CH{{\mathcal H}}
\newcommand\CR{{\mathcal R}}
\newcommand\GF{\bG^F}
\DeclareMathOperator\Irr{\mathrm{Irr}}
\DeclareMathOperator\Idch{\mathbf{1}}
\DeclareMathOperator\Id{\mathrm{Id}}
\DeclareMathOperator\refl{\mathrm{refl}}
\DeclareMathOperator\Res{\mathrm{Res}}
\DeclareMathOperator\feg{\mathrm{feg}}
\DeclareMathOperator\Deg{\mathrm{Deg}}
\DeclareMathOperator\GL{\mathrm{GL}}
\DeclareMathOperator\Uch{\mathrm{Uch}}
\DeclareMathOperator\cox{\mathrm{cox}}
\newcommand\inv{^{-1}}
\newcommand\scal[3]{\langle#1,#2\rangle_{#3}}
\newcommand\lexp[2]{\kern\scriptspace\vphantom{#2}^{#1}\kern-\scriptspace#2}
\title[Tower equivalence]{Tower equivalence and Lusztig's truncated Fourier transform}
\author{Jean Michel}
\date{25 July 2022}
\address[J.~Michel]{IMJ-PRG, Universit\'e Paris cit\'e, 
B\^atiment Sophie Germain, 75013, Paris France.}
\email{jean.michel@imj-prg.fr}
\subjclass[2020]{20F55,05E10}
\keywords{Lusztig's Fourier transform, tower equivalence}
\begin{document}
\begin{abstract}
If  $f$ denotes the  truncated Lusztig Fourier  transform, we show that the
image by $f$ of the normalized characteristic function of a Coxeter element
is   the  alternate   sum  of   the  exterior   powers  of  the  reflection
representation,  and that  any class  function is  tower equivalent  to its
image by $f$. In particular this gives a proof of the results of Chapuy and
Douvropoulos  on  ``Coxeter  factorizations  with  generalized Jucys-Murphy
weights  and matrix tree  theorems for reflection  groups'' for irreducible
spetsial reflection groups, based on Deligne-Lusztig combinatorics.
\end{abstract}
\maketitle
\subsection*{Introduction}
This  paper is a kind  of follow-up to \cite{Mic},  which gives a uniform
proof  of the results of  Chapuy and Stump \cite{CS}  for Weyl groups using
Deligne-Lusztig   combinatorics.  Here  we  extend  this  to  all  spetsial
reflection groups and to the results of Chapuy and Douvropoulos \cite{CD}.

In  the current  paper we  extend  the results  of \cite{Mic}  to all
irreducible   spetsial  groups  (which  coincide  with  the  well-generated
irreducible complex reflection groups except for 6 well-generated primitive
groups  of rank 2 which are  not spetsial, see for example \cite[Corollary
8.3]{Malle3}). In particular, proposition \ref{cchi} (valid for any complex
reflection group) extends \cite[Lemma 3]{Mic} and proposition \ref{Coxeter}
extends \cite[Lemma 5]{Mic}.

Let  us recall some  definitions of Chapuy  and Douvropoulos \cite{CD}. Let
$W\subset\GL(V)$ be a well-generated finite irreducible reflection group on
the  complex vector  space $V$  of dimension  $n$. A  {\em tower}  is a
maximal  chain $\{1\}=W_0\subsetneq W_1\subsetneq\ldots W_n=W$ of parabolic
subgroups  of  $W$;  a  parabolic  subgroup  is  the  fixator of a flat (an
intersection  of some reflecting hyperplanes for  $W$), and is a reflection
subgroup  by a  theorem of  Steinberg. The  chain being  maximal means that
$W_i$ is the fixator of a flat of codimension $i$.

Let $\refl(W)$ be the set of reflections of $W$. To a tower
$T=W_0\subsetneq\ldots\subsetneq  W_n$,  Chapuy  and Douvropoulos \cite{CD}
associate  a set  $J_T=\{J_T^i\}_{i=1,\ldots,n}$ of  elements of  the group
algebra   given  by 
$$J_T^i=\sum_{s\in\refl(W_i)-\refl(W_{i-1})}s,$$
and associate the commutative subalgebra $\BC[J_T]\subset\BC
[W]$.  Two class functions on $W$ are  said to be $T$-equivalent if they have
the  same restriction to  $\BC[J_T]$. Two class  functions $\chi,\chi'$ are
said  to be tower equivalent if they are $T$-equivalent for all towers $T$.
We will simply write $\chi\equiv\chi'$ in this case.

The  main result of \cite{CD} (from which they derive uniformly the others)
is that 
$$\sum_{\chi\in\Irr(W)}\chi(c\inv)\chi\equiv\sum_i (-1)^i\Lambda^i(V)$$
where $c$ is a Coxeter element of $W$ and where on the
right appear the exterior powers of the reflection representation of $W$.

We   derive  this  formula   of  Chapuy-Douvropoulos  from  Deligne-Lusztig
combinatorics, using specifically  Lusztig's truncated Fourier transform.
Let  us recall what is this Fourier transform in the Weyl group case.
Let  $\bG^F$ be a  split finite reductive  group with Frobenius  $F$ and Weyl
group  $W$, and let  $U_\chi$ for $\chi\in\Irr(W)$  be the principal series
unipotent  character indexed by  $\chi$. We define  the ``truncated Lusztig
Fourier  transform'' (truncated  because it  is projected  to the principal
series)  as  the  linear operator which  maps  $\chi$  to  the  class function
$f(\chi)$ on $W$ defined by $$\text{for $w\in W$, }
f(\chi)(w)=\scal{U_\chi}{R_{\bT_w}^\bG(\Idch)}\GF.$$ where
$R_{\bT_w}^\bG(\Idch)$  is the  Deligne-Lusztig character  induced from the
trivial  character of a  torus of type  $w$. The definition  is extended by
linearity to all class functions on $W$.

We extend below this definition to spetsial groups, using the results of
\cite{BMM1}, \cite{BMM} and \cite{Malle1}.
Then the three steps of our proof are
\begin{itemize}
\item For any $w\in W$, we have $\sum_{\chi\in\Irr(W)}\chi(w\inv)\chi\equiv
       \sum_{\chi\in\Irr(W)}\chi(w)\chi$.
\item $f(\sum_{\chi\in\Irr(W)}\chi(c)\chi)=\sum_i(-1)^i\Lambda^i(V)$.
\item For any class function $\chi$ on $W$, we have
$f(\chi)\equiv\chi$.
\end{itemize}

The  third step  implies that  the image  of $\Id-f$ consists of functions
which  are tower equivalent to $0$. A natural question is whether the image
of  $\Id-f$ spans the space of all such functions. It turns out that in all
cases  we  could  compute,  except  surprisingly for the primitive spetsial
group $G_{32}$, these two spaces coincide.

There  are two  features of  any current  work on  spetses which  should be
mentioned.  

The  first  is  that  we use the papers \cite{BMM},  \cite{Malle2} and
\cite{MR}, which  assume the  trace  conjecture  for  cyclotomic  Hecke algebras
(\cite[Theorem-Assumption  2  (1)]{BMM1}).  This  conjecture  is  currently
established  for finite Coxeter groups, and all irreducible spetsial groups
excepted some primitive groups of rank $\ge 3$; see \cite{BCC} for a
recent paper on the topic.

The second is that some properties of spetses are based on the fact that they
satisfy a certain number of ``axioms'' stated in \cite[Chapter 4]{BMM}. Here
the situation is the opposite; these axioms have been checked for
finite Coxeter groups and primitive irreducible spetsial groups, 
but only some of them have been checked in
\cite{Malle} and \cite{Malle1} for imprimitive spetsial groups. We point out
where we use such axioms.

I thank Gunter Malle and the referees for remarks which improved previous
versions of this paper.

\subsection*{First step}
This step, valid for any finite complex reflection group,  results from
\begin{lemma}
For any $\chi\in\Irr(W)$, we have $\chi\equiv\overline\chi$.
\end{lemma}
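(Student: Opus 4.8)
The plan is short and purely formal, exploiting the inversion symmetry built into the elements $J_T^i$. Fix a tower $T=W_0\subsetneq\cdots\subsetneq W_n$; by the definition of tower equivalence it suffices to check that $\chi$ and $\overline\chi$ restrict to the same function on the subalgebra $\BC[J_T]\subset\BC[W]$. I would introduce the linear anti-automorphism $a\mapsto a^*$ of $\BC[W]$ determined by $g^*=g\inv$ for $g\in W$, and record the elementary identity $\overline\chi(a)=\chi(a^*)$ valid for all $a=\sum_{g}a_g g\in\BC[W]$: indeed $\overline\chi(a)=\sum_g a_g\overline{\chi(g)}=\sum_g a_g\chi(g\inv)=\chi(a^*)$, using $\overline{\chi(g)}=\chi(g\inv)$ since $W$ is finite. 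So everything reduces to showing that every element of $\BC[J_T]$ is fixed by $*$.

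First I would observe that each generator is $*$-fixed. Since $W_{i-1}$ and $W_i$ are subgroups and the inverse of a reflection is again a reflection, the set $\refl(W_i)-\refl(W_{i-1})$ is stable under $s\mapsto s\inv$; reindexing the sum then gives $(J_T^i)^*=\sum_{s\in\refl(W_i)-\refl(W_{i-1})}s\inv=J_T^i$. Next, because $\BC[J_T]$ is commutative it is spanned by monomials $J_T^{i_1}\cdots J_T^{i_k}$ in the commuting $*$-fixed elements $J_T^i$, and since $*$ is an anti-automorphism and the factors commute, $(J_T^{i_1}\cdots J_T^{i_k})^*=(J_T^{i_k})^*\cdots(J_T^{i_1})^*=J_T^{i_1}\cdots J_T^{i_k}$. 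Hence $a^*=a$ for all $a\in\BC[J_T]$, so $\overline\chi(a)=\chi(a^*)=\chi(a)$; as $T$ is arbitrary, $\chi\equiv\overline\chi$.

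I do not expect a genuine obstacle here: the whole content is the observation that $\BC[J_T]$ lies in the fixed subring of the standard anti-involution of the group algebra, the two points deserving a moment's care being the inversion stability of $\refl(W_i)-\refl(W_{i-1})$ and the use of commutativity of $\BC[J_T]$ to propagate $*$-invariance from the generators to the whole algebra. (Note that $\overline\chi$ is again a genuine character, namely that of the dual representation, so the statement $\chi\equiv\overline\chi$ is a relation between class functions in the sense required.)
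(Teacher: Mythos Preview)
Your proof is correct and follows exactly the same route as the paper: introduce the anti-involution $g\mapsto g^{-1}$ on $\BC[W]$, observe that it interchanges $\chi$ and $\overline\chi$, and then note that $\BC[J_T]$ is pointwise fixed because its generators are inversion-stable sums and the algebra is commutative. Your write-up merely spells out in more detail what the paper states in three lines.
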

\begin{proof}
Let $i$ be the anti-involution of $\BC[W]$ induced by $w\mapsto w\inv$ for
$w\in W$. The effect of $i$ on $\chi\in\Irr(W)$
is to send it to $\overline\chi$.
For any tower $T$, the algebra $\BC[J_T]$ is fixed pointwise by $i$ since its
generators are fixed by $i$ and it is a commutative algebra. The lemma follows.
\end{proof}
\subsection*{Coxeter numbers}
In this section $W\subset\GL(V)$ is any finite complex reflection group,
not necessarily irreducible or well-generated.
As  in \cite{GG}, we define the {\em Coxeter number} of $\chi\in\Irr(W)$ as
$\cox_\chi:=\omega_\chi(\CR)$,  the  scalar  by  which  the
central element $\CR=\sum_{s\in\refl(W)}(1-s)$
in  the group algebra acts on the representation underlying $\chi$. 
We denote  by $\feg\chi\in\BN[q]$ the fake degree of
$\chi$, the graded degree of $\chi$ in the coinvariant algebra of $W$, and
define $N(\chi)=(\frac d{dq}\feg\chi)_{q=1}$.

Let $\zeta_e:=\exp(2i\pi/e)\in\BC$. The spetsial Hecke algebra $\CH$ of $W$
(see  \cite[6.4]{BMM})  is  the  Hecke  algebra  over $\BC[q^{\pm 1}]$ with
parameters   given  by   $q,\zeta_e,\zeta_e^2,\ldots,\zeta_e^{e-1}$  for  a
reflection  hyperplane $H$ of $W$ such that $|C_W(H)|=e$. In the following,
we assume that $\CH$ satisfies the trace conjecture since we use results of
\cite{BMM1}, \cite{Malle2} and \cite{MR}..

The   algebra  $\CH$  splits  over   some  extension  $\BC[t^{\pm  1}]$  of
$\BC[q^{\pm 1}]$ where $t^m=q$ for some $m\in\BN$ (one can take $m=|Z(W)|$,
see  \cite[7.2(a)]{Malle2}). The algebra $\BC[W]$ is a deformation of $\CH$
for $t\mapsto 1$, which induces a bijection
$\chi_t\mapsto\chi:\Irr(\CH)\to\Irr(W)$.  Through this bijection the Galois
action  $t\mapsto  e^{2i\pi/m}t$  on  $\Irr(\CH)$  induces a permutation on
$\Irr(W)$ that we will denote $\delta$. The map
$\chi\mapsto\delta(\overline\chi)$   is   an   involution,  called  Opdam's
involution.  We will denote by $S_\chi$ the  Schur element of $\CH$ for the
character  of $\CH$ which specializes to  $\chi$, and we denote by $a_\chi$
and  $A_\chi$, the  valuation and  the degree  in $q$ of $S_{\Idch}/S_\chi$
(they may be rational numbers, not integral, but the next proposition shows
that their sum is integral).

\begin{proposition}\label{cchi}
For any complex reflection group $W$ and any $\chi\in\Irr(W)$ we have
$\cox_\chi=(N(\chi)+N(\overline\chi))/\chi(1)=a_\chi+A_\chi$.
\end{proposition}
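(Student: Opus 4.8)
The plan is to prove the two equalities separately: the first, valid for every complex reflection group, by a direct computation with the coinvariant algebra, and the second by transporting everything to the spetsial Hecke algebra. For the first equality, recall from Chevalley--Shephard--Todd that the coinvariant algebra $R=\bigoplus_j R_j$ of $W$ is the regular representation, so that $\feg\chi(q)=\sum_j\langle R_j,\chi\rangle q^j$ and $N(\chi)=\sum_j j\langle R_j,\chi\rangle$. Writing the graded character of $R$ at $w\in W$ by the Molien formula as $P_w(q)=\prod_i(1-q^{d_i})/\det(1-qw)$, one gets
\[\feg\chi(q)+\feg{\overline\chi}(q)=\frac1{|W|}\sum_{w\in W}\bigl(\chi(w)+\chi(w\inv)\bigr)P_w(q).\]
Since $\prod_i(1-q^{d_i})$ vanishes at $q=1$ to order $n$ while $\det(1-qw)$ (computed on $V$) vanishes there to order $\dim V^w$, both $P_w(1)$ and $P_w'(1)$ vanish unless $\dim V^w\ge n-1$, that is unless $w=1$ or $w$ is a reflection. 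A short computation gives $P_1'(1)=\tfrac12|W|\,|\refl(W)|$ (using $\sum_i(d_i-1)=|\refl(W)|$) and, for a reflection $w$ with nontrivial eigenvalue $\zeta_w$, $P_w'(1)=-|W|/(1-\zeta_w)$; pairing $w$ with $w\inv$, the reflection contributions collapse to $-|W|\sum_{w\in\refl(W)}\chi(w)$. Differentiating the displayed identity at $q=1$ then gives
\[N(\chi)+N(\overline\chi)=\chi(1)\,|\refl(W)|-\sum_{w\in\refl(W)}\chi(w)=\chi(\CR)=\chi(1)\cox_\chi,\]
which is the first equality. This step uses nothing beyond invariant theory.

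For the second equality, set $\Deg_\chi:=S_{\Idch}/S_\chi$ (a polynomial in $q$, possibly with fractional exponents coming from $t^m=q$), whose valuation is $a_\chi$ and whose degree is $A_\chi$. Since $\CH$ specializes to $\BC[W]$ at $q=1$ one has $S_\chi(1)=|W|/\chi(1)$, hence $\Deg_\chi(1)=\chi(1)$. The two inputs I would need are: (i) $\Deg_\chi$ agrees with $\feg\chi$ to first order at $q=1$, i.e.\ $\Deg_\chi'(1)=N(\chi)$ --- equivalently $S_\chi'(1)/S_\chi(1)=\tfrac12|\refl(W)|-N(\chi)/\chi(1)$, a statement about the first-order deformation of the symmetrizing form of $\CH$; and (ii) the symmetry $q^{a_\chi+A_\chi}\Deg_\chi(q\inv)=\Deg_\psi(q)$, where $\psi$ equals $\overline\chi$ up to the permutation $\delta$ (which leaves $N$ unchanged, fake degrees lying in $\BN[q]$). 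Granting these, differentiate (ii) at $q=1$: using $\Deg_\chi(1)=\chi(1)$ one gets $(a_\chi+A_\chi)\chi(1)=\Deg_\chi'(1)+\Deg_\psi'(1)$, and then (i) turns the right-hand side into $N(\chi)+N(\psi)=N(\chi)+N(\overline\chi)$, which is the second equality.

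The hard part will be inputs (i) and (ii): these are genuine facts about Schur elements of cyclotomic Hecke algebras, not formal consequences of what precedes. Fact (ii) should come from the $q\mapsto q\inv$ automorphism of $\CH$ together with complex conjugation and the product formula for $S_\chi$; fact (i) amounts to controlling $\tfrac d{dq}S_\chi$ at $q=1$, where $\CH$ degenerates to $\BC[W]$, and should likewise be read off Malle's formulas. Uniformly this rests on the structure theory of \cite{BMM}, \cite{Malle1}, \cite{Malle2}, and on some of the axioms of \cite[Chapter~4]{BMM} for the imprimitive spetsial groups --- which is exactly where the standing hypothesis that $\CH$ satisfies the trace conjecture is used; where no uniform argument is available I would verify (i) and (ii) case by case from the character tables, as the surrounding discussion anticipates.
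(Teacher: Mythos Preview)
Your treatment of the first equality is correct and genuinely different from the paper's. The paper does not argue directly: it quotes Malle's formula $\cox_\chi=(N(\chi)+N(\iota(\chi)))/\chi(1)$ with $\iota$ Opdam's involution, and then reduces to showing $N(\delta(\chi))=N(\chi)$ by a Hecke-algebra argument --- since the eigenvalues of $T_s$ in any $\CH$-representation lie in $\{q,\zeta_e,\ldots,\zeta_e^{e-1}\}\subset\BC[q]$, the Galois twist $t\mapsto\zeta_m t$ fixes $\chi_t(T_s)$, whence $\chi(s)=\delta(\chi)(s)$ for every reflection $s$, and one concludes via the formula $N(\chi)=|\refl(W)|/2-\sum_s\chi(s)/(1-\overline{\det(s)})$. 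Your Molien-series differentiation bypasses all of this: it uses only invariant theory, avoids the Hecke algebra entirely, and in particular does not invoke the trace conjecture for this step. That is an advantage.

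For the second equality there is a gap. The paper dispatches it in one line by citing \cite[Corollary~6.9]{BMM1}. Your route via (i) $\Deg_\chi'(1)=N(\chi)$ and (ii) the $q\mapsto q^{-1}$ symmetry of Schur elements is reasonable, and both inputs are indeed in the BMM/Malle literature; but your parenthetical justification ``which leaves $N$ unchanged, fake degrees lying in $\BN[q]$'' does not work. The permutation $\delta$ acts on $\Irr(W)$, not on polynomials; that $\feg_\chi\in\BN[q]$ says nothing about whether $N(\delta(\chi))=N(\chi)$, since $\feg_{\delta(\chi)}$ is the fake degree of a possibly different character. Without this, your differentiated identity only yields $(a_\chi+A_\chi)\chi(1)=N(\chi)+N(\iota(\chi))$, one step short of the claim. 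Establishing $N(\delta(\chi))=N(\chi)$ is precisely the nontrivial content of the paper's proof sketched above, and you would need to supply that argument (or an equivalent one) to close the gap.
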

\begin{proof}
The  second  equality  is  a  direct  consequence  of  the first one and of
\cite[Corollary 6.9]{BMM1}. Let us prove the first one.

We can reduce to the case where $W$ is irreducible, since both sides of the
first equality are additive for an external tensor product of characters.

As pointed out in \cite[Remark 2]{Mic}, it results from \cite[6.5]{Malle2} that
we   have  $\cox_\chi=(N(\chi)+N(i(\chi)))/\chi(1)$   where  $i$   is  Opdam's
involution. Thus it is sufficient to prove that
$N(\overline\chi)=N(i(\chi))$,  or  equivalently  to  prove  that  for  any
$\chi\in\Irr(W)$  we have $N(\chi)=N(\delta(\chi))$. Let $T_w\in \CH$ be an
element which specializes to $w\in W$. We note that if for
$\chi_t\in\Irr(\CH)$ we have $\chi_t(T_w)\in\BC[q]$ then
$\delta(\chi)(w)=\chi(w)$,  since the Galois  action which defined $\delta$
leaves  $q$ invariant.  We apply  this to  a braid  reflection $\bs$ in the
braid group of $W$ of image $s\in\refl(W)$ in $W$, and whose image in $\CH$
we denote by $T_s$. Then if $s$ is of order $e$, by definition of $\CH$ the
eigenvalues  of $T_s$ in a representation  of character $\chi_t$ are in the
set $\{q,\zeta_e,\ldots,\zeta_e^{e-1}\}$, in particular
$\chi_t(T_s)\in\BC[q]$.  The same considerations apply  to powers of $\bs$,
so  we get that  for any $s\in\refl(W)$  and any $\chi\in\Irr(W)$ we have
$\chi(s)=\delta(\chi)(s)$.  We conclude by  using \cite[formula 1.10]{BMM1}
which states that
$N(\chi)=|\refl(W)|/2-\sum_{s\in\refl(W)}\chi(s)/(1-\overline{\det(s)})$.
\end{proof}

\begin{corollary}\label{aA}
For any complex reflection group, $\cox_\chi$ is
constant on Rouquier families of irreducible characters.
\end{corollary}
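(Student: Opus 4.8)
The plan is to read the statement off Proposition \ref{cchi}. That proposition gives $\cox_\chi=a_\chi+A_\chi$, the valuation plus the degree in $q$ of $S_{\Idch}/S_\chi$, and a Rouquier family is by definition a block of the spetsial Hecke algebra $\CH$ over the Rouquier ring; so what has to be shown is precisely that the sum $a_\chi+A_\chi$ is constant on the blocks of $\CH$ over the Rouquier ring. This is part of the structure theory of cyclotomic Hecke algebras, and I would invoke it rather than reprove it: for finite Coxeter groups it is classical (it belongs to Lusztig's theory of families, where the $a$-invariant is constant on a family and the generic degrees in a family are suitably symmetric), and in the spetsial case it has been verified, case by case for the primitive groups and through the explicit Schur-element formulas for the imprimitive ones. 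Combined with Proposition \ref{cchi} this yields the corollary at once, and as in the proof of that proposition one may first reduce to $W$ irreducible, since Rouquier families of a direct product are products of Rouquier families while $a_\chi$ and $A_\chi$ are additive under external tensor product.

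If one prefers a uniform heuristic for why $a_\chi+A_\chi$ ought to be a block invariant, the natural object is the full twist $\pi$ generating the centre of the braid group of $W$. Its image in $\CH$ is central, and on the irreducible module indexed by $\chi$ it acts by a scalar $\omega_\chi(\pi)$ which, over the ring $\BC[t^{\pm1}]$ splitting $\CH$, is a root of unity times a power of $t$ whose exponent is an affine function of $a_\chi+A_\chi$ (the normalization being pinned down in small rank). Since $\pi$ is central, $\chi\mapsto\omega_\chi(\pi)$ factors through the central character, hence takes the same value, modulo the prime ideal cutting out a given block, on all irreducible characters lying in that block.

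The point at which this heuristic stops short — and the reason the corollary has genuine content — is that congruence of $\omega_\chi(\pi)$ and $\omega_{\chi'}(\pi)$ modulo an essential prime $(\Phi_d(q))$ only amounts to equality of these scalars after specializing $q$ to a root of unity $\zeta_d$, which merely forces $a_\chi+A_\chi$ and $a_{\chi'}+A_{\chi'}$ to be congruent modulo $d$, not equal on the nose. Promoting this to the actual equality is exactly where the finer block structure of $\CH$ must be used, so that is the step I expect to be the real obstacle and the one I would lean on the literature for; beyond Proposition \ref{cchi} and this block-invariance no further input is needed.
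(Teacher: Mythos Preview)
Your proposal is correct and follows the same route as the paper: reduce via Proposition~\ref{cchi} to the statement that $a_\chi+A_\chi$ is constant on Rouquier families, and then invoke this as a known fact from the literature. The paper's proof is exactly this, citing \cite[Lemme~2.8]{MR} (Malle--Rouquier) for the block-invariance of $a_\chi+A_\chi$; your additional heuristic via the full twist is not needed, and as you yourself note it does not by itself close the gap.
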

\begin{proof}
This is an immediate consequence of the second equality in proposition
\ref{cchi} and \cite[Lemme 2.8]{MR}.
\end{proof}

This is the occasion to give an erratum to \cite{Mic}. The last sentence
of  this paper claims that Corollary  \ref{aA} fails for $G_6$ and $G_8$,
but this was due to a programming error.

\subsection*{Fourier transform}
Let  now $W$ be an irreducible finite spetsial complex reflection group. We
recall  that $W$ is called a spetsial  group if for any $\chi\in\Irr(W)$ we
have  $S_{\Idch}/S_\chi\in\BC[q]$  (it  is  a  priori  only  an  element of
$\BC(t)$).  We have defined in \cite{BMM} for the spetses $\BG=(V,W)$ a set
$\Uch(\BG)$  of ``unipotent characters'', which contains a principal series
$\{U_\chi\}_{\chi\in\Irr(W)}$;
a unipotent characters $\rho$ has a ``generic degree'' $\Deg(\rho)$, 
a polynomial in $q$,
which for the principal series is $\Deg(U_\chi)=S_{\Idch}/S_\chi$.
We also defined ``virtual characters'' $R_{\BT_w}^\BG(\Idch)$ (elements of
$\BZ\Uch(\BG)$),  so the definition we
gave of $f$ by  $f(\chi)(w)=\scal{U_\chi}{R_{\BT_w}^\BG(\Idch)}\BG$
still makes sense for spetsial complex reflection groups.
Since $R_{\BT_w}^\BG(\Idch)$
is a virtual character, $f(\chi)$ takes rational integral values for
$\chi\in\Irr(W)$.

The spetsial complex reflection groups are well-generated, so they have a
unique largest reflection degree $h$, called the Coxeter number. An element
$c\in  W$ is  called a  Coxeter element  if it  is a regular element in the
sense of Springer for the eigenvalue $e^{2i\pi/h}$.
\begin{proposition}\label{Coxeter}
Let $c$ be a Coxeter element of the irreducible spetsial complex reflection
group $W$. For $\chi\in\Irr(W)$ we have 
 $$f(\chi)(c)=\begin{cases}(-1)^i&\text{if $\chi=\Lambda^i(V)$,}\\
                0&\text{otherwise.}
              \end{cases}$$
\end{proposition}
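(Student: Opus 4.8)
Since $f(\chi)(c)=\scal{U_\chi}{R_{\BT_c}^\BG(\Idch)}\BG$, the statement is equivalent to: the projection of the Deligne--Lusztig character $R_{\BT_c}^\BG(\Idch)$ onto the span of the principal series unipotent characters $\{U_\chi\}_{\chi\in\Irr(W)}$ equals $\sum_{i=0}^{\ell}(-1)^i\,U_{\Lambda^i(V)}$, where $\ell=\dim V$. (For well-generated $W$ the characters $\Lambda^i(V)$, $0\le i\le\ell$, are irreducible and pairwise distinct.) The plan is to extract this from the regularity of $c$.

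First, $c$ is regular for a primitive $h$-th root of unity, $h$ being the unique largest reflection degree of $W$. Hence the $\Phi_h$-part of the order polynomial of the spets $\BG=(V,W)$ is $\Phi_h$, so $\BT_c$ is a Sylow $\Phi_h$-torus of $\Phi_h$-rank $1$, with relative group $W_\BG(\BT_c)$ cyclic of order $h$. By $\Phi_h$-Harish-Chandra theory for spetses (\cite{BMM}, and \cite{Malle1} for the imprimitive groups), the constituents of $R_{\BT_c}^\BG(\Idch)$ are exactly the $h$ unipotent characters of the principal $\Phi_h$-series over $\BT_c$, each occurring with coefficient $\pm1$; they are parametrised by $\Irr(W_\BG(\BT_c))=\Irr(\BZ/h)$, and their generic degrees are the relative generic degrees of the cyclotomic Hecke algebra of $\BZ/h$ attached to $(\BT_c,\Idch)$. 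Comparing these with the principal series generic degrees $\Deg(U_\chi)=S_{\Idch}/S_\chi$ — and using Springer's expression of $\chi(c)$ in terms of $\feg_\chi$ together with the explicitly known fake degrees of the exterior powers of $V$ to locate them — I would identify the constituents lying in the ordinary principal series as precisely the $U_{\Lambda^i(V)}$, $0\le i\le\ell$; the remaining constituents are then orthogonal to every $U_\chi$, $\chi\in\Irr(W)$, hence irrelevant to $f(\chi)(c)$.

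It remains to show that the coefficient of $U_{\Lambda^i(V)}$ in $R_{\BT_c}^\BG(\Idch)$ is $(-1)^i$, and this is the main obstacle. The extreme cases are immediate: $U_{\Lambda^0(V)}=\Idch_\BG$ occurs with coefficient $\scal{\Idch_\BG}{R_{\BT_c}^\BG(\Idch)}\BG=1$, and the coefficient of $U_{\Lambda^\ell(V)}$ is $(-1)^\ell$ (in the Weyl group case $\scal{\mathrm{St}_\BG}{R_{\BT_c}^\BG(\Idch)}\BG=(-1)^{\ell(c)}$ with $\ell(c)=\ell$; for a spets one reads this off the top generic degree and the reflection length of a Coxeter element). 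The interior range $0<i<\ell$, where $\Lambda^i(V)$ can lie in a non-trivial Lusztig family, is the real point. In the Weyl group case (treated in \cite{Mic}) it comes from Lusztig's computation of the cohomology of the Coxeter Deligne--Lusztig variety $X(c)$, in which the principal series constituent $U_{\Lambda^i(V)}$ occupies cohomological degree $\ell+i$, so that it enters the Euler characteristic $R_{\BT_c}^\BG(\Idch)$ with sign $(-1)^i$. For spetsial groups there is no such variety; there the coefficient with which a constituent occurs in $R_{\BT_c}^\BG(\Idch)$ is the sign $\varepsilon$ of $\Phi_h$-Harish-Chandra theory (\cite[Ch.~4]{BMM}) attached to the relative cyclotomic Hecke algebra of $W_\BG(\BT_c)=\BZ/h$, and I would check that $\varepsilon=(-1)^i$ on $U_{\Lambda^i(V)}$ directly from the explicit relative generic degrees — uniformly for the imprimitive spetsial groups via the symbol/partition combinatorics of \cite{Malle1}, and by a finite computation for the finitely many primitive spetsial groups. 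This is where one uses the axioms on spetses of \cite[Ch.~4]{BMM} (and the trace conjecture for the cyclotomic Hecke algebras entering the argument), as flagged in the introduction.
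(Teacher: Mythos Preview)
Your framework matches the paper's: decompose $R_{\BT_c}^\BG(\Idch)$ via $\Phi_h$-Harish-Chandra theory, identify which constituents lie in the ordinary principal series, and read off the signs. The paper also splits into cases (Weyl groups via \cite{Mic}, primitive spetsial by computer, the two imprimitive series by explicit computation), just as you propose.

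However, your execution has two real gaps.

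First, the crucial tool you do not derive is the identity $\varepsilon_\varphi\varphi(1)=\Deg(\rho_\varphi)(\zeta_h)$, obtained by specialising the Schur-element formula for the relative cyclotomic Hecke algebra at $q=\zeta_h$ (using that this algebra is a $\zeta_h$-deformation of $\BC[W_\BG(\BT_c)]$, whence $S_\varphi(\zeta_h)=|W_\BG(\BT_c)|/\varphi(1)$). Since $W_\BG(\BT_c)$ is cyclic, $\varphi(1)=1$ and one gets $\varepsilon_\varphi=\Deg(\rho_\varphi)(\zeta_h)$. This is what turns your vague ``check $\varepsilon=(-1)^i$ from the relative generic degrees'' into the concrete task of evaluating the \emph{ordinary} generic degree $\Deg(U_{\Lambda^i(V)})=S_{\Idch}/S_{\Lambda^i(V)}$ at $\zeta_h$, which the paper then does by hand from Chlouveraki's Schur-element formulas for $G(e,1,n)$ and $G(e,e,n)$. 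Without this identity you have no access to the sign.

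Second, your identification step conflates fake degrees and generic degrees. Springer's formula gives $\chi(c)$ in terms of $\feg_\chi(\zeta_h)$, but membership in the $\Phi_h$-principal series is characterised by $\Deg(U_\chi)(\zeta_h)\ne0$, and these two polynomials differ outside type $A$. The paper does not use fake degrees here at all: it invokes Malle's explicit description of the $\Phi$-principal series for $G(e,1,n)$ and $G(e,e,n)$ in terms of symbols admitting an $(n,\zeta_e)$- (resp.\ $(n{-}1,\zeta_e)$-)hook, and then observes that among principal-series symbols only those for $(n{-}k,1^k,\emptyset,\ldots,\emptyset)=\Lambda^k(V)$ survive. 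Your appeal to Springer and fake degrees would not yield this.
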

\begin{proof}
For Weyl groups, this is a reformulation of \cite[Lemma 5]{Mic}.

For  the  irreducible  spetsial  primitive  complex  reflection  groups, we
checked  the property  by computer.  Here is  a {\tt  Chevie} \cite{chevie}
program  which returns {\tt true}\ for a  spetsial group $W$ if and only if
Proposition \ref{Coxeter} holds:
\begin{verbatim}
proposition3:=function(W)local un,c,ex;
  un:=UnipotentCharacters(W).harishChandra[1].charNumbers;
  c:=DeligneLusztigCharacter(W,PositionRegularClass(W,
                             Maximum(ReflectionDegrees(W)))).v{un};
  ex:=ChevieCharInfo(W).extRefl;
  return PositionsProperty(c,x->x<>0)=Permuted(ex,SortingPerm(ex))
    and c{ex}=List([0..Length(ex)-1],i->(-1)^i);
end;
\end{verbatim}
It  remains to deal with the  infinite series $G(e,1,n)$ and $G(e,e,n)$. We
use  the $\Phi$-Harish-Chandra theory  in spetsial groups.  Let $\Phi$ be a
$K$-cyclotomic polynomial, where $K$ is the field of definition of $W$, and
 $(\BL,\lambda)$ a $\Phi$-cuspidal pair of the spets $\BG=(V,W)$
(see \cite[4.31]{BMM}). Then we have
the  following decomposition in unipotent  characters indexed by characters
of the relative Weyl group:
$$R_\BL^\BG(\lambda)=\sum_{\varphi\in\Irr(W_\BG(\BL,\lambda))}
  \varepsilon_\varphi\varphi(1)\rho_\varphi$$
where the degree of the unipotent character $\rho_\varphi$ is given by
$$\Deg(\rho_\varphi)=\varepsilon_\varphi \Deg(\lambda)\frac
 {(|\BG|/|\BL|)_{q'}}{S_\varphi}$$
where  $(|\BG|/|\BL|)_{q'}$  is  the  quotient  of  the  polynomial  orders
stripped  of  any factors of $q$ that divide it and  $S_\varphi$  is  the Schur element
associated to $\varphi$ in the relative Hecke algebra $\CH_\BG(\BL,\lambda)$.
If   $\zeta$  is  a   root  of  $\Phi$,   the  relative  Hecke  algebra  is
a $\zeta$-cyclotomic deformation of the group algebra of $W_\BG(\BL,\lambda)$,
which implies that $S_\varphi(\zeta)=|W_\BG(\BL,\lambda)|/\varphi(1)$. Thus
$$\varepsilon_\varphi\varphi(1)=\frac{|W_\bG(\BL,\lambda)|\Deg(\rho_\varphi)
 (\zeta)}{\Deg(\lambda)(\zeta)(|\BG|/|\BL|)_{q'}(\zeta)}.$$
We  want to apply this to the case where $\BL=\BT$ is a Coxeter torus, that
is   when  $\Phi$  is  an  irreducible  factor  of  the  $h$-th  cyclotomic
polynomial and $\zeta=\zeta_h$,  in  which  case  it  simplifies  somewhat:  since $\BL$ is the
centralizer of a $\Phi$-Sylow torus we have by \cite[5.3(ii) and 5.4]{BMM1}
$|W_\BG(\BL,\lambda)|=(|\BG|/|\BL|)_{q'}(\zeta_h)$,
and  since  $\BL$  is  a
torus we have $\Deg(\lambda)=1$ so we get
$\Deg(\rho_\varphi)(\zeta_h)=\varepsilon_\varphi\varphi(1)$. We finally get
$$R_\BT^\BG(\Idch)=\sum_{\varphi\in\Irr(W_\BG(\BT))}
\Deg(\rho_\varphi)(\zeta_h)\rho_\varphi.$$
The  characters $\rho_\varphi$ which appear on  the right-hand side are the
unipotent  characters of the  $\Phi$-principal series ---  they are exactly
the unipotent characters $\rho$ such that $\Deg(\rho)(\zeta_h)\ne 0$.
So  to compute the Fourier transforms $f(\chi)(c)$ it remains to find which
$\rho_\varphi$  are  unipotent  characters  $U_\chi$  in  the principal
series (which are similarly the unipotent characters $\rho$ such that
$\Deg(\rho)(1)\ne 0$), and for these compute $\Deg(U_\chi)(\zeta_h)$.

Malle  describes in \cite[3.14 and 6.10]{Malle} which characters are in the
$\Phi$-principal  series.  For  $G(e,1,n)$  they  are  parameterized by the
$e$-partitions  whose corresponding  symbol has  a $(n,\zeta_e)$-hook (here
$h=en$).  In the principal series, we get the characters $U_{\chi_k}$ where
$\chi_k$ is parameterized by the $e$-partition
$(n-k,1^k,\emptyset,\ldots,\emptyset)$ for $k\in 0,\ldots,n$, which is what
 we expect as $\chi_k=\Lambda^k(V)$.

For  $G(e,e,n)$  characters  are  parameterized by $e$-partitions up to
cyclic  permutations (at least  when the $e$-partition  is not equal to one
its  $e-1$ distinct  cyclic permutations,  otherwise several characters are
parameterized   by  the  same  $e$-partition)  and  similarly  the
$\Phi$-principal series corresponds to
$e$-partitions  whose corresponding symbol has a $(n-1,\zeta_e)$-hook (here
$h=e(n-1)$). For the principal series characters we find exactly the same 
partitions as in the $G(e,1,n)$ case, determining characters $\chi_k$ which
correspond also to the $\Lambda^k(V)$.

It remains to compute $\Deg(U_{\chi_k})(\zeta_h)$. 
Since $U_{\chi_0}=\Idch$ it suffices
to consider $k=1,\ldots,n$. For $G(e,1,n)$ we can use Chlouveraki's
formula \cite[3.2]{CJ} for the value of the corresponding Schur element 
$S_{\chi_k}$ of the Hecke algebra of $G(e,1,n)$ with parameters
$((q,\zeta_e,\ldots,\zeta_e^{e-1}),(q,-1),\ldots,(q,-1))$, related  to the
degree by $\Deg(U_{\chi_k})=S_{\chi_k}\inv\prod_{i=1}^n\frac{q^{ei}-1}{q-1}$,
and after some transformations we get
$$S_{\chi_k}=\frac{e(q^k-1) (q^n-\zeta_e)
\prod_{i=1}^{n-k}(q^{ei}-1)
\prod_{i=1}^{k-1}(q^{ei}-1)}
{q^{k+e\binom k2}(q-1)^n(q^{n-k}-\zeta_e)}$$
whence
$$\Deg(U_{\chi_k})=\frac{q^{k+e\binom k2}(q^{n-k}-\zeta_e)
\prod_{i=k}^n(q^{ei}-1)}
{e(q^k-1) (q^n-\zeta_e) \prod_{i=1}^{n-k}(q^{ei}-1)}$$
To evaluate this at $\zeta_{en}$ we first write
$\frac{q^{en}-1}{q^n-\zeta_e}=\prod_{i\in 0,\ldots,e-1;i\ne 1}(q^n-\zeta_e^i)$
whose value at $\zeta_{en}$ is 
$\prod_{i\in 0,\ldots,e-1;i\ne
1}(\zeta_e-\zeta_e^i)=\zeta_e\inv\prod_{i=1}^{e-1}(1-\zeta_e^i)=e\zeta_e\inv$,
the last equality by taking the value at $q=1$ of $\frac{q^e-1}{q-1}$.
We next evaluate
$$\frac{\prod_{i=k}^{n-1}(q^{ei}-1)}{\prod_{i=1}^{n-k}(q^{ei}-1)}\mid_{q=\zeta_{en}}
=\frac{\prod_{i=k}^{n-1}(\zeta_n^i-1)}{\prod_{i=1}^{n-k}-\zeta_n^i
(\zeta_n^{n-i}-1)}=
\prod_{i=1}^{n-k}(-\zeta_n^{-i})=(-1)^{n-k}\zeta_n^{-\binom{n-k+1}2}$$
and finally get the expected value $(-1)^k$.

For $G(e,e,n)$ we use that the spetsial Hecke algebra $\CH$ is a subalgebra of
the Hecke algebra $\CH'$ of $G(e,1,n)$ with parameters
$((1,\zeta_e,\ldots,\zeta_e^{e-1}),(q,-1),\ldots,(q,-1))$, and the quotient 
$\CH'/\CH$ is of dimension $e$. For a
partition which has no cyclic symmetry (the case of our partitions except
$\chi_1$ for $e=n=2$, which does not occur since $G(2,2,2)$ is not
irreducible) we have $S_\chi=S'_\chi/e$ where
$S'_\chi$ is the Schur element for $\CH'$, since the restriction of $\chi$
to $G(e,e,n)$ is irreducible.
After some transformations we get from the formula of \cite{CJ} for
$S'_\chi$
$$S_{\chi_k}=\frac{a(q^{n-1}-\zeta_e)(q-\zeta_e\inv)(q^{n-k}-1)(q^k-1)
\prod_{i=1}^{n-k-1}(q^{ei}-1)\prod_{i=1}^{k-1}(q^{ei}-1)}
{q^{1+e\binom k2}(q^{n-k-1}\zeta_e\inv-1)(\zeta_e-q^{k-1})(q-1)^n}$$
where the term $(q^k-1)$ is absent if $k=0$ and the term $(q^{n-k}-1)$ absent
if $k=n$, and further, we have $a=1$ if $k$ is $0$ or $n$ and $a=e$ otherwise.
Since $\Deg(U_{\chi_k})=S_{\chi_k}\inv\frac{q^n-1}{q-1}
\prod_{i=1}^{n-1}\frac{q^{ei}-1}{q-1}$ we get
$$\Deg(U_{\chi_k})=
\frac{q^{1+e\binom k2}(q^{n-k-1}\zeta_e\inv-1)(\zeta_e-q^{k-1})(q^n-1)
\prod_{i=k}^{n-1}(q^{ei}-1)}
{a(q^{n-1}-\zeta_e)(q-\zeta_e\inv)(q^{n-k}-1)(q^k-1)
\prod_{i=1}^{n-k-1}(q^{ei}-1)}.$$
To get the value at $\zeta_{e(n-1)}$ similarly to the $G(e,1,n)$ case
we start by simplifying $\frac{q^{e(n-1)}-1}{q^{n-1}-\zeta_e}$, then
$\frac{\prod_{i=k}^{n-2}(q^{ei}-1)}{\prod_{i=1}^{n-k-1}(q^{ei}-1)}
\mid_{q=\zeta_{e(n-1)}}$, and get the expected value $(-1)^k$.
\end{proof}

The  above proof for $G(e,1,n)$ and $G(e,e,n)$ is not really a proof in the
sense  that it has  not yet been  proven that these  groups satisfy all the
axioms  for spetses, including  the unicity of  the Fourier matrix. Another
more direct approach would be to check the result using the values given by
Malle  \cite{Malle} for  the Fourier  matrix, which  is feasible  but takes
several  pages of not very enlightening computations. We have preferred the
approach given above.

The  referee  pointed  out  to  me  the  following facts. A more conceptual
statement  of the above result on  the intersection of the $\Phi$-principal
and  the principal  series is  that the  only $\Phi$-block  of the spetsial
Hecke  algebra  $\CH=\CH_\BG(\BT,\Idch)$  of  defect  1  is  the  principal
$\Phi$-block  (and  all  others  have  defect  0).  This  is  \cite[Theorem
6.6]{BGK} for Weyl groups. For Weyl groups \cite[Theorem 9.6]{Geck} further
shows  that the corresponding Brauer tree  algebra is of type $A$. Assuming
this,  in \cite[page  286]{CM} it  is shown  that the  Schur element ratios
evaluated a $q=\zeta$ are Morita invariants (generalizing
\cite[\S3.7]{Broue}).  Our  result  begs  for  a  conceptual proof of these
observations for spetsial Hecke algebras.

\begin{proposition}\label{symmetric}
For any $w\in W$ we have 
$$
f\left(\sum_{\chi\in\Irr(W)}\chi(w)\chi\right)=
\sum_{\chi\in\Irr(W)}f(\chi)(w)\chi.$$
\end{proposition}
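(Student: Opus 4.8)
The plan is to recognize the claimed identity as a symmetry statement about the matrix of $f$ with respect to the basis $\Irr(W)$. Write $f(\chi) = \sum_{\psi\in\Irr(W)} f_{\psi,\chi}\,\psi$ for a scalar matrix $(f_{\psi,\chi})$; by definition $f_{\psi,\chi}$ is obtained by expanding the class function $w\mapsto\scal{U_\chi}{R_{\bT_w}^\bG(\Idch)}\bG$ in the basis of irreducible characters. Using the orthonormality of $\Irr(W)$ with respect to $\scal{\ }{\ }W$, one gets $f_{\psi,\chi}=\scal{f(\chi)}{\psi}W = \frac1{|W|}\sum_{w\in W}\scal{U_\chi}{R_{\bT_w}^\bG(\Idch)}\bG\,\overline{\psi(w)}$. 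The left-hand side of the proposition, evaluated as a class function and expanded in $\Irr(W)$, has $\psi$-coefficient $\sum_\chi \chi(w) f_{\psi,\chi}$; the right-hand side has $\psi$-coefficient $f_{\psi,\chi}$ summed against $\chi(w)$ in the other index, i.e. $\sum_\chi \chi(w) f_{\chi,\psi}$. So the proposition is exactly the assertion that the matrix $(f_{\psi,\chi})$ is symmetric: $f_{\psi,\chi}=f_{\chi,\psi}$ for all $\chi,\psi\in\Irr(W)$.

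To prove this symmetry I would unwind $f_{\psi,\chi}$ further. The family $\{R_{\bT_w}^\bG(\Idch)\}_{w\in W}$ (more precisely their class-function averages) and the principal series $\{U_\chi\}_{\chi\in\Irr(W)}$ are related through Lusztig's Fourier matrix: each $U_\chi$ is a linear combination $\sum_E \{U_\chi, R_E\}R_E$ of the almost characters $R_E$ attached to $\Irr(W)$ (here restricted to the principal series block), and the $R_{\bT_w}^\bG(\Idch)$ are in turn expressed via the nonabelian-Fourier-transform data and the values $\chi(w)$. Substituting these expansions, $f_{\psi,\chi}$ becomes a sum over the principal-series unipotent characters of a product of two Fourier-matrix entries, one indexed by $\chi$ and one by $\psi$, weighted by nothing that distinguishes the two indices. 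The symmetry $f_{\psi,\chi}=f_{\chi,\psi}$ then reduces to the symmetry of Lusztig's (truncated) Fourier matrix, which is one of its defining properties — and, in the spetsial case, one of the axioms of \cite[Chapter 4]{BMM} that the relevant groups are assumed to satisfy. I would state precisely which normalization of the Fourier matrix makes the bookkeeping come out symmetric (the entries $\{\rho,R_E\}$ together with the eigenvalues of Frobenius), cite the symmetry axiom, and note the caveat about imprimitive groups as elsewhere in the paper.

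An alternative, and perhaps cleaner, route avoids the Fourier matrix entirely: observe that $f_{\psi,\chi}=\scal{f(\chi)}{\psi}W$, and that $f$ is, up to the orthonormal change of basis $\chi\mapsto R_\chi$, the composite of (i) the isometry sending $\psi$ to the almost character $R_\psi$ and (ii) the projection of $R_{\bT_\bullet}^\bG(\Idch)$ onto the principal series. Both $\scal{\ }{\ }\bG$ on unipotent class functions and $\scal{\ }{\ }W$ on $\BC[W]$ are symmetric bilinear (after the real structure given by Lemma, $\chi\equiv\overline\chi$, one may even work with symmetric rather than Hermitian forms), so $f_{\psi,\chi}=\scal{U_\chi}{\text{(something symmetric in the pair)}}{}$. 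The main obstacle, and the step I expect to require the most care, is precisely this: making the "pairing is symmetric in $\chi$ and $\psi$" claim rigorous requires the self-duality/symmetry of Lusztig's Fourier transform restricted to a Harish-Chandra (or $\Phi$-Harish-Chandra) block, together with the fact that the $R_{\bT_w}^\bG(\Idch)$ span the relevant space in a way compatible with this symmetry. For Weyl groups this is classical Deligne–Lusztig theory; for spetses it rests on the axiomatics of \cite{BMM}, and I would flag that dependence explicitly, exactly as the introduction does for the other steps.
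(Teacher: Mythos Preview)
Your reduction is exactly the paper's: the identity is equivalent to the symmetry $\scal{f(\chi)}{\psi}W=\scal{f(\psi)}{\chi}W$, which is then deduced from the symmetry of Lusztig's Fourier matrix (a theorem for Weyl groups, an axiom for spetses, with the appropriate citations). The paper carries this out in one line by observing that $\scal{f(\chi)}{\psi}W$ is literally the Fourier matrix entry $F_{\chi,\psi}=\scal{U_\chi}{|W|^{-1}\sum_w\psi(w)R_{\bT_w}^\bG}\GF$, so your detour through almost characters and projections is unnecessary; your second ``alternative'' route is vaguer than the first and adds nothing, so I would drop it.
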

\begin{proof}
The equality is  a  consequence  of  the  fact that the matrix of $f$ is
symmetric, that is for $\chi,\psi\in\Irr{W}$ we have $\scal{f(\chi)}\psi W=
\scal{f(\psi)}\chi W$. For Weyl groups this is a consequence of the explicit
determination  of $f$, see for  example \cite[Theorem 14.2.3]{DM}. There is
also  a  case-free  proof  based  on Shintani descent \cite[III, Corollaire
3.5(iii)]{DM1}.  For spetses it  is a consequence of the axiom that the
Fourier matrix $F_{\chi,\psi}=\scal{U_\chi}
{|W|\inv\sum_w\psi(w)R_{\bT_w}^\bG}\GF$ is symmetric (see \cite{Malle1}, \S 5).
It follows that for $\psi\in\Irr(W)$ we have
$$\begin{aligned}
  \scal{f(\sum_{\chi\in\Irr(W)}\chi(w)\chi)}\psi W&=
\sum_{\chi\in\Irr(W)}\chi(w)\scal{f(\chi)}\psi W\\
  &=\scal{f(\psi)}{\sum_{\chi\in\Irr(W)}\chi(w\inv)\chi}W\\
  &=f(\psi)(w)
\end{aligned}$$
the last equality since the function
$\sum_{\chi\in\Irr(W)}\chi(w\inv)\chi$  is  the  normalized  characteristic
function  of the class of $w$. This shows the proposition.
\end{proof}

It is clear that Proposition \ref{Coxeter} and Proposition \ref{symmetric}
for $w=c$ imply the second step described at the end of the introduction.
\subsection*{Tower equivalence}
We now prove the third step described at the end of the introduction.
\begin{proposition}\label{tower}
For a spetsial reflection group $W$ and a class function
$\chi$ on $W$, we have $\chi\equiv f(\chi)$. 
\end{proposition}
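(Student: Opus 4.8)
The plan is to reduce the statement to a statement about a single tower $T$, and then to exploit the fact that $T$-equivalence is detected by the restriction to the commutative algebra $\BC[J_T]$, whose generators $J_T^i=\sum_{s\in\refl(W_i)-\refl(W_{i-1})}s$ are sums of reflections. Concretely, it suffices to show that for every tower $T$ and every $i$, the class function $f(\chi)$ and $\chi$ have the same value when paired against the appropriate central idempotents of $\BC[J_T]$; equivalently, since $f$ is linear and $\Irr(W)$ spans the class functions, it suffices to treat $\chi\in\Irr(W)$ and to compare $\sum_w f(\chi)(w)\,\mu(w)$ with $\sum_w \chi(w)\,\mu(w)$ for $\mu$ ranging over the restriction functionals coming from $\BC[J_T]$. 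The key structural input is that $\BC[J_T]$ is generated by the single element $J_T:=\sum_i J_T^i = \sum_{s\in\refl(W)} s = \CR' $ together with its ``partial sums'' $J_T^1+\cdots+J_T^i$, each of which is again a sum of reflections; and a sum of reflections acts on an irreducible $\chi$ with eigenvalues governed by Coxeter-number data of the parabolic subgroups $W_i$.

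The main step, then, is to reinterpret $f(\chi)$ spectrally. By definition $f(\chi)(w)=\scal{U_\chi}{R_{\bT_w}^\bG(\Idch)}\GF$, and by Proposition~\ref{symmetric} the operator $f$ is self-adjoint, so its eigenfunctions on class functions are the obvious candidates built from Deligne--Lusztig theory. The crucial observation is that for $w$ running over a fixed ``tower class'' — i.e.\ a coset structure aligned with a maximal chain of parabolics — the Deligne--Lusztig characters $R_{\bT_w}^\bG(\Idch)$ are built by Harish-Chandra induction from the parabolics $W_i$, and the pairing with $U_\chi$ only involves the Coxeter numbers $\cox_{\chi}$ of the constituents (this is exactly where Proposition~\ref{cchi} enters: $\cox_\chi=a_\chi+A_\chi$ ties the scalar action of $\sum_s(1-s)$ to the $a$- and $A$-invariants that control generic degrees). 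I would make this precise by showing that the restriction of $f(\chi)$ to $\BC[J_T]$ is computed by the same data — the multiset of Coxeter numbers $\{\cox_{\chi}^{W_i}\}$ of the Harish-Chandra constituents — that computes the restriction of $\chi$ itself to $\BC[J_T]$, namely the eigenvalues of $J_T^1+\cdots+J_T^i = \CR_{W_i}$-like elements on $\chi$ restricted to $W_i$.

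A cleaner route, and probably the one the paper takes, is an induction on $n=\dim V$ via parabolic restriction. Fix a tower $T$ with $W_{n-1}$ a maximal parabolic; the functional on class functions given by restriction to $\BC[J_T]$ factors through $\Res^W_{W_{n-1}}$ composed with the analogous functional for the truncated tower $T'$ of $W_{n-1}$, together with the single extra generator $J_T^n$. One shows (i) $f$ commutes suitably with Harish-Chandra restriction to the spets of $W_{n-1}$ — this is a known compatibility of the Fourier transform with Harish-Chandra series, modulo the spetsial axioms flagged in the introduction — so that $\Res^W_{W_{n-1}} f(\chi) \equiv \Res^W_{W_{n-1}} \chi$ by the inductive hypothesis applied in $W_{n-1}$; and (ii) the action of the top generator $J_T^n=\sum_{s\in\refl(W)-\refl(W_{n-1})} s$ on $f(\chi)$ matches its action on $\chi$, which is forced by the scalar $\cox_\chi - \cox_{\chi|_{W_{n-1}}}$-type computation and Proposition~\ref{cchi} together with the fact, established in Proposition~\ref{Coxeter}, that $f$ behaves correctly on the exterior powers $\Lambda^i(V)$ which span the relevant eigenspace of $\CR$. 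The base case $n=1$ is trivial since $W_1=W$ is cyclic and $f=\Id$ there.

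The hard part will be step (i): proving that the truncated Lusztig Fourier transform $f$ is compatible with $d$-Harish-Chandra restriction in the spetsial setting — i.e.\ that restricting $f(\chi)$ along $\bG \supset \bL_{W_{n-1}}$ gives the $W_{n-1}$-Fourier transform of the restricted character — without being able to invoke a fully established theory of spetses. This is exactly the kind of statement that rests on the axioms of \cite[Chapter 4]{BMM}, so I expect the proof either to cite the relevant axiom explicitly (as the introduction promises) or to verify the needed compatibility by hand for the infinite series $G(e,1,n)$, $G(e,e,n)$ using Malle's Fourier matrices, with the primitive groups dispatched by computer as in Proposition~\ref{Coxeter}. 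Everything else — the bookkeeping with $\BC[J_T]$, the reduction to $\Irr(W)$, the eigenvalue computation via $\cox_\chi$ — is routine once that compatibility is in hand.
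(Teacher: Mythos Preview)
Your inductive scheme is the right one and matches the paper: one inducts on the rank, and the key analytic input is exactly your step~(i), the commutation
\[
\Res^W_{W'}\,f(\chi)=f'\bigl(\Res^W_{W'}\chi\bigr)
\]
for every maximal parabolic $W'$. The paper proves this identity (not merely up to tower equivalence) by the mechanism you anticipate: $U_{\Res^W_{W'}\chi}={}^{*}R^\BG_{\BG'}U_\chi$, adjunction, and transitivity of Deligne--Lusztig induction, invoking the relevant spetsial axioms from \cite{BMM} and \cite{Malle1}. No case-by-case check for $G(e,1,n)$ or $G(e,e,n)$ is needed here.

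The genuine gap is your step~(ii). The element $J_T^n=\sum_{s\in\refl(W)\setminus\refl(W_{n-1})}s$ is \emph{not} central in $\BC[W]$ and does not act by a scalar on $\chi$ or on $f(\chi)$, so the phrase ``$\cox_\chi-\cox_{\chi|_{W_{n-1}}}$'' is not well defined (the restriction is reducible). What actually works --- and what the paper uses via the criterion \cite[Lemma~7.2]{CD} --- is to replace the generator $J_T^n$ in $\BC[J_T]$ by the central element $\sum_{s\in\refl(W)}s=J_T^1+\cdots+J_T^n$. This reduces $T$-equivalence to: (a) $\chi$ and $f(\chi)$ lie in the same Coxeter-isotypic component, and (b) their restrictions to $W_{n-1}$ are $T'$-equivalent for the truncated tower. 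Point~(b) is handled by (i) plus the inductive hypothesis. Point~(a) is the substantive fact you are missing: $f(\chi)$ is Coxeter-isotypic with $\cox_{f(\chi)}=\cox_\chi$ because the Fourier matrix is block-diagonal along Lusztig families, these coincide with Rouquier families, and by Corollary~\ref{aA} the Coxeter number is constant on Rouquier families. Finally, your invocation of Proposition~\ref{Coxeter} is misplaced: that proposition computes $f(\chi)$ at the Coxeter \emph{element} $c$ and is used for the second step of the paper's overall argument, not for the present one; it plays no role in the proof of Proposition~\ref{tower}.
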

\begin{proof}
We induct on the rank of $W$. Henceforth we assume the statement for every
proper parabolic subgroup $W'$ of $W$.

To  deduce the statement  for $W$ from  the statement for  $W'$, we use the
nice  criterion for tower equivalence given by \cite[Lemma 7.2]{CD}. We say
that  a class  function is  Coxeter-isotypic if  all its  components in the
basis  of  irreducible  characters  have  the  same  Coxeter  number.
Lemma
\cite[7.2]{CD} states that two class functions $\chi$ and $\chi'$ are tower
equivalent  if for any  maximal proper parabolic  subgroup $W'$ of $W$, and
for any Coxeter number, the restrictions of the Coxeter-isotypic components
of $\chi$ and $\chi'$ of same Coxeter number to $W'$ are tower equivalent.

For any irreducible character the class function
$f(\chi)$ is Coxeter-isotypic; this is a consequence of Corollary \ref{aA},
which says that the Coxeter number is constant on Rouquier families,
and the fact that the blocks of the Fourier matrix (the Lusztig families)
coincide with Rouquier families; see \cite{Rou} for Weyl groups and
\cite[\S 5.2]{Malle1} in general.
It is thus sufficient  to prove for any irreducible character $\chi$
and for any maximal proper parabolic subgroup $W'$ of
$W$ that  we have  $\Res^W_{W'}f(\chi)\equiv\Res^W_{W'}\chi$.  
Since by the inductive hypothesis we know that,
if $f'$ is the  truncated Lusztig Fourier transform  on $W'$, we have
$f'(\Res^W_{W'}\chi)\equiv\Res^W_{W'}(\chi)$, it suffices to prove
$$\Res^W_{W'}f(\chi)=f'(\Res^W_{W'}\chi).$$  

Let us thus show that
for $w\in W'$, we have $f(\chi)(w)=f'(\Res^W_{W'}\chi)(w)$, that is, if
$\BG'$ is a split Levi subgroup of $\BG$ of Weyl group $W'$, we have
$\scal{U_\chi}{R^\BG_{\BT_w}(\Idch)}\BG=\scal
 {U_{\Res^W_{W'}\chi}}{R^{\BG'}_{\BT_w}(\Idch)}{\BG'}$. We use the facts:
\begin{itemize}
\item $U_{\Res^W_{W'}\chi}=\lexp *R^\BG_{\BG'} U_\chi$ where $\lexp *R$ is
the Lusztig restriction functor. See for example \cite[Lemma 7.2.11]{DM} for
reductive groups.
This is a general property of commuting algebras --- the algebras involved here
are $\CH_\BG(\BT,\Idch)$ and $\CH_{\BG'}(\BT,\Idch)$---  so it still works for
spetses,  as suggested  in \cite{BMM}  by Axiom  4.6 for $\zeta=1$ or Axiom
4.16(ii) for $(\BL,\lambda)=(\BT,\Idch)$.
\item The transitivity of Deligne-Lusztig induction and the adjunction of
Deligne-Lusztig induction and restriction; see for example \cite[chapter 9]{DM}
for reductive groups. For spetses adjunction is the definition of Lusztig
restriction and for transitivity see for example \cite[Theorem 4.3 (b)]{Malle1}.
\end{itemize}
It follows that
$$\begin{aligned}
\scal{U_{\Res^W_{W'}\chi}}{R^{\BG'}_{\BT_w}(\Idch)}{\BG'}
&=\scal{\lexp *R^\BG_{\BG'}U_\chi}{R^{\BG'}_{\BT_w}(\Idch)}{\BG'}\\
&=\scal{U_\chi}{R_{\BG'}^\BG\circ R^{\BG'}_{\BT_w}(\Idch)}\BG\\
&=\scal{U_\chi}{R^\BG_{\BT_w}(\Idch)}\BG
\end{aligned}$$
the first equality by the first fact, the second by adjunction and the third
by transitivity. This shows the proposition.

\end{proof}

It follows from Proposition \ref{tower} that the image of $\Id-f$ is a vector
space of class functions tower equivalent to $0$. We define the kernel of the
tower equivalence to be the space of all class functions tower equivalent to 0.
Computer calculations suggest the following question:
\begin{question}
Is  it true  that for  any irreducible  spetsial reflection group different
from  $G_{32}$, the image  of $\Id-f$ is  equal to the  kernel of the tower
equivalence?
\end{question}
This question is based on the verification of the equality of the above two
spaces  for many  spetsial reflection  groups, in  particular for  all Weyl
groups  of  rank  $\le  10$,  and  for  all  primitive spetsial groups. The
existence  of one  counter-example suggests  the possibility  of others and
prevents making this question into a conjecture.

For  $G_{32}$ the space of class functions  on $W$ is of dimension 102, the
dimension  of the kernel of tower equivalence  is 78, and that of the image
of  $\Id-f$ is 77.  It is to  note that the  blocks of both matrices on the
basis  of  irreducible  characters  coincide  with the Rouquier families of
characters.  The discrepancy occurs in the 16th family (in the {\tt Chevie}
numbering),  which  contains  7  characters.  In  the space spanned by this
family  the image of $\Id-f$ is of  dimension $5$ while the kernel of tower
equivalence is of dimension 6.
\bibliography{tower}
\bibliographystyle{amsplain}
\end{document}